\documentclass[11pt]{amsart}

\usepackage[T1]{fontenc}
\usepackage[utf8]{inputenc}
\usepackage{lmodern}
\usepackage{microtype}
\usepackage{amsmath,amssymb,mathtools}
\usepackage{enumitem}
\usepackage[hidelinks]{hyperref}
\usepackage[a4paper,margin=31mm]{geometry}

\newtheorem{theorem}{Theorem}[section]
\newtheorem{lemma}[theorem]{Lemma}
\newtheorem{proposition}[theorem]{Proposition}
\newtheorem{corollary}[theorem]{Corollary}
\theoremstyle{definition}
\newtheorem{definition}[theorem]{Definition}
\theoremstyle{remark}

\newtheorem{example}[theorem]{Example}

\newcommand{\Z}{\mathbb Z}

\newcommand{\Primes}{\mathcal P}

\title{Hopficity of profinite completions of abelian groups}

\author{Mattia Brescia}
\author{Ernesto Ingrosso}
\author{Marco Trombetti}

\address{Dipartimento di Matematica e Applicazioni ``Renato Caccioppoli'',
Universit\`a degli Studi di Napoli Federico II, Complesso Universitario Monte S. Angelo,
Via Cintia, Napoli, Italy}
\email{mattia.brescia@unina.it}
\email{ernesto.ingrosso2@unina.it}
\email{marco.trombetti@unina.it}

\subjclass[2020]{Primary 20E18; Secondary 20K20, 22C05}
\keywords{Hopfian group, profinite completion, abelian group, pro-$p$ completion, basic subgroup, Kourovka Notebook}

\begin{document}

\begin{abstract}
We determine exactly when the profinite completion of an arbitrary abelian
 group is topologically Hopfian.  For an abelian group $A$, we prove that
\[
  \widehat A \text{ is topologically Hopfian}
  \quad\Longleftrightarrow\quad
  A/pA \text{ is finite for every prime }p.
\] As a byproduct, we answer Problem 6.30 of the Kourovka Notebook in the
negative: for pairwise distinct odd primes $q_i$, the group
$\bigoplus_{i\geq1}\Z[1/q_i]$ is residually finite and Hopfian, whereas its
profinite completion is not topologically Hopfian.
\end{abstract}

\maketitle

\section{Introduction}

A surjective endomorphism of a finitely generated residually finite group is
injective, and a continuous surjective endomorphism of a topologically
finitely generated profinite group is likewise injective.  It is therefore
natural to ask how much of this rigidity remains when finite generation is
abandoned and one passes from a discrete group to its profinite completion. The main result of this paper gives a complete answer in the abelian case.  

\medskip

\noindent{\bf Main Theorem}\quad  {\it Let $A$ be an abelian group, and $\widehat A$ its profinite completion.  The following conditions are equivalent\textnormal:
\begin{enumerate}[label=\textup{(\roman*)}]
\item $\widehat A$ is topologically Hopfian\textnormal;
\item $A/pA$ is finite for every prime $p$\textnormal;
\item every Sylow pro-$p$ subgroup of $\widehat A$ is topologically finitely
      generated.
\end{enumerate}
}

\medskip

The theorem has a direct consequence for a problem of~Mel'nikov.  In
the sixth issue of the Kourovka Notebook, published in 1978, he asked whether
the profinite completion of every residually finite Hopfian group must be
Hopfian as a topological group; this is~Problem 6.30 in the current
edition~\cite{Kourovka}.  Consider the abelian group
\[
        G=\bigoplus_{i\geq1}\Z[1/q_i],
\]
where the $q_i$ are pairwise distinct odd primes. This is clearly Hopfian and residually finite. On the other hand,
$G/2G\cong\bigoplus_{i\geq1}C_2$ is infinite, so our Main Theorem 
yields that $\widehat G$ is not topologically Hopfian.

\smallskip

We use standard facts about profinite groups from
Ribes--Zalesskii~\cite{RibesZalesskii}, elementary abstract group theory from
Robinson~\cite{Robinson}, and the classical existence theorem for \hbox{$p$-basic} subgroups from Fuchs~\cite{Fuchs}. 

\section{Preliminaries}

An abstract group is \emph{Hopfian} if every surjective endomorphism is
injective.  A topological group is \emph{topologically Hopfian} if the same is
true for continuous endomorphisms. When referring to a profinite group to be Hopfian we usually omit the word ‘‘topologically’’ if it is clear from the context.  All homomorphisms between profinite groups
below are understood to be continuous.

Let \(A\) be an abelian group. Recall that its {\it profinite completion} is defined by
\[
\widehat{A}
   := \varprojlim_{\substack{N \leq A \\ [A:N] < \infty}} A/N.
\]
For each prime \(p\), we write
\[
\widehat{A}_p
   := \varprojlim_{\substack{N \leq A \\ [A:N] \text{ is a power of } p}} A/N
\]
for the {\it pro-\(p\) completion} of \(A\). Equivalently, \(\widehat{A}_p\) is the
maximal pro-\(p\) quotient of \(\widehat{A}\). Since \(\widehat{A}\) is abelian,
it decomposes canonically as
\[
\widehat{A} \cong \prod_{p} \widehat{A}_p,
\]
and \(\widehat{A}_p\) identifies with the Sylow pro-\(p\) subgroup of
\(\widehat{A}\). The following two results are easy and probably well-known.

\begin{lemma}\label{lem:tfg-hopfian}
Every topologically finitely generated profinite group is topologically
Hopfian.
\end{lemma}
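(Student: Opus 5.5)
The plan is to use the classical fact that a topologically finitely generated profinite group $G$ has only finitely many open normal subgroups of each given index (Ribes--Zalesskii). Given this, I would show that a continuous surjective endomorphism $\varphi\colon G\to G$ has trivial kernel by proving that $\ker\varphi$ lies in every open normal subgroup of $G$. Since $G$ is profinite, the intersection of its open normal subgroups is trivial, and injectivity follows.

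First I would recall why the finiteness holds. If $G$ is topologically generated by $x_1,\dots,x_d$, then an open normal subgroup $N$ of index $n$ is determined by the homomorphism $G\to G/N$. This homomorphism is determined by the images of the $x_i$ in a group of order $n$, up to isomorphism of that finite group. There are only finitely many groups of order $n$ and only finitely many $d$-tuples in each, so there are finitely many such $N$. Continuity of $G\to G/N$ is what lets the images of a dense generating set determine the map.

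Next, fix $n$ and let $\mathcal U_n$ be the finite set of open normal subgroups of index $n$. For $N\in\mathcal U_n$, the preimage $\varphi^{-1}(N)$ is open, since $\varphi$ is continuous. It is normal, and by surjectivity of $\varphi$ it has index exactly $n$, because $\varphi$ induces an isomorphism $G/\varphi^{-1}(N)\cong G/N$. Thus $N\mapsto\varphi^{-1}(N)$ maps $\mathcal U_n$ into itself. This map is injective, since surjectivity gives $\varphi(\varphi^{-1}(N))=N$. As $\mathcal U_n$ is finite, the map is a bijection. Hence every $M\in\mathcal U_n$ has the form $\varphi^{-1}(N)$ for some $N$, and therefore $\ker\varphi\le\varphi^{-1}(N)=M$. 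Letting $n$ vary, $\ker\varphi$ lies in every open normal subgroup, so $\ker\varphi=1$.

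The only genuinely nontrivial input is the finiteness of $\mathcal U_n$, and even that is routine. The point to be careful about is that it relies on the continuity of the quotient maps together with density of the abstract subgroup generated by $x_1,\dots,x_d$. I would cite Ribes--Zalesskii for this fact rather than reprove it in detail. A closed endomorphism issue does not arise, since the argument never needs $\varphi$ to be a closed map.
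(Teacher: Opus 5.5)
Your proof is correct. The paper gives no proof of this lemma (it only calls it easy and probably well known), and your argument (finitely many open normal subgroups of each index $n$, permuted injectively and hence bijectively by $N\mapsto\varphi^{-1}(N)$, so $\ker\varphi$ lies in all of them) is the standard one found in Ribes--Zalesskii, which the paper cites as its general reference.
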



\begin{lemma}\label{lem:primary}
Let $K$ be an abelian profinite group and let $K_p$ be its~Sy\-low pro-$p$
subgroup.  Then every endomorphism of $K$ preserves each $K_p$, and $K$ is Hopfian if and only if $K_p$ is Hopfian for every $p$.
\end{lemma}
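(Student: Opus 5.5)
The plan is to prove both assertions using only the decomposition $K=\prod_p K_p$ and the fact that $K_p$ is the Sylow pro-$p$ subgroup of $K$.

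\emph{Step 1: endomorphisms preserve each $K_p$.} Let $\varphi\colon K\to K$ be a continuous endomorphism. The image $\varphi(K_p)$ is a continuous image of a pro-$p$ group, hence a closed pro-$p$ subgroup of $K$. In an abelian profinite group, every pro-$p$ subgroup lies in the unique Sylow pro-$p$ subgroup. Concretely, the projection of $\varphi(K_p)$ onto each factor $K_\ell$ with $\ell\neq p$ is both pro-$p$ and pro-$\ell$, hence trivial. Therefore $\varphi(K_p)\le K_p$, and $\varphi=\prod_p\varphi_p$ with $\varphi_p:=\varphi|_{K_p}\colon K_p\to K_p$.

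\emph{Step 2: the Hopf property splits over the primes.} Suppose first that each $K_p$ is Hopfian, and let $\varphi$ be surjective. Then $\varphi(K)=\prod_p\varphi_p(K_p)$, and comparing the $p$-components (equivalently, applying the projection $K\to K_p$, which commutes with $\varphi$) shows that each $\varphi_p$ is surjective. By hypothesis each $\varphi_p$ is injective. The kernel of $\varphi$ is $\prod_p\ker\varphi_p$, since $\varphi$ acts coordinatewise on $\prod_p K_p$. Hence $\ker\varphi$ is trivial.

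Conversely, suppose $K$ is Hopfian and let $\psi\colon K_p\to K_p$ be a continuous surjection. Extend it to $\Psi:=\psi\times\mathrm{id}$ on $K=K_p\times\prod_{\ell\neq p}K_\ell$. This map is continuous and surjective, so it is injective, and hence $\psi$ is injective.

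The only point needing any care is the identification of $\varphi(K)$ with $\prod_p\varphi_p(K_p)$ as a closed subgroup. This holds because the image of a compact group is closed, and a product of closed subgroups of the factors is closed in the product. I do not expect a real obstacle here; the crux is simply Step 1.
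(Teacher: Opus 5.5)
The paper gives no proof of this lemma; it is stated together with the preceding one as ``easy and probably well-known'', so there is nothing to compare against. Your proof is correct and is the natural one.

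The point that actually needs care is not the closedness issue you flag at the end. It is the assertion at the end of Step~1 that $\varphi=\prod_p\varphi_p$. In an infinite product, a homomorphism mapping each factor into itself need not a priori act coordinatewise. Step~2 relies on this twice: through $\pi_p\circ\varphi=\varphi_p\circ\pi_p$ and through $\ker\varphi=\prod_p\ker\varphi_p$.

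Add one sentence to close this. The maps $\varphi$ and $\prod_p\varphi_p$ are continuous homomorphisms that agree, by additivity, on the dense subgroup $\bigoplus_p K_p$ of $\prod_p K_p$. Since $K$ is Hausdorff, they agree on all of $K$. Alternatively, apply your Step~1 argument to the pro-$p'$ subgroup $\prod_{\ell\neq p}K_\ell$: $\varphi$ preserves it as well, which gives $\pi_p\circ\varphi=\varphi_p\circ\pi_p$ directly.

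With this in place, the closedness remark is superfluous. The image of a product map is literally the product of the images. Each $\varphi_p$ is surjective simply because $\varphi_p\circ\pi_p=\pi_p\circ\varphi$ is.
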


\begin{definition}
Let $A$ be an abelian group and let $p$ be a prime.  A subgroup $B\leq A$ is
called \emph{$p$-basic} if:
\begin{enumerate}[label=\textup{(\roman*)}]
\item $B$ is a direct sum of infinite cyclic groups and finite cyclic
      $p$-groups;
\item $B$ is $p$-pure in $A$, that is,
      $B\cap p^mA=p^mB$ for every $m\geq1$;
\item $A/B$ is $p$-divisible, that is, $p(A/B)=A/B$.
\end{enumerate}
\end{definition}

We need the classical $p$-basic subgroup theorem of Kulikov.

\begin{theorem}\label{thm:basic-existence}
Every abelian group has a $p$-basic subgroup for every prime $p$.
\end{theorem}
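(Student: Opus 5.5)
We prove Theorem~\ref{thm:basic-existence} by the classical construction of a maximal $p$-independent subgroup. Call a subset $X\subseteq A\setminus\{0\}$ \emph{$p$-independent} if, whenever $\sum_{x\in X} n_x x\in p^mA$ with almost all $n_x=0$ and $n_xx\neq0$ for some $x$, every term satisfies $n_xx\in p^mA$. Equivalently, the subgroup $\langle X\rangle$ is the direct sum $\bigoplus_{x\in X}\langle x\rangle$ and is $p$-pure in $A$. We restrict to elements $x$ that are either of infinite order or of order a power of $p$. Then $\langle X\rangle$ automatically satisfies condition (i), and condition (ii) holds by construction.

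The condition ``$p$-independent'' is checked on finite subsets, so the union of a chain of $p$-independent sets is again $p$-independent. By Zorn's lemma there is therefore a maximal $p$-independent set $X$ (consisting of elements of infinite order or of $p$-power order). Put $B=\langle X\rangle$. It remains to verify condition (iii), that $A/B$ is $p$-divisible.

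This is the main step, and the argument is by contradiction. Suppose $A\neq pA+B$. Since $B$ is pure, $A/B$ is then not $p$-divisible. We want an element $a\in A$ with the following properties:
\begin{itemize}
\item $a\notin pA+B$;
\item $pa\in B$, or more generally $p^k a\in B$ with a suitable minimality condition;
\item $a$ can be adjusted by elements of $B$ so that $a$ has maximal height in its coset $a+B$.
\end{itemize}

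The standard trick runs as follows. First we show that $(A/B)/p(A/B)$ nonzero forces an element $\bar a$ of order $p$ in $A/B$ not divisible by $p$, or else a torsion-free-type element, in which case we take $a$ of infinite order with $\langle a\rangle\cap B=0$. For the order-$p$ case we write $pa=b\in B$. By $p$-purity of $B$ we may replace $a$ by $a-b'$ so that the height of $a$ equals the height of $a+B$ in $A/B$, which is $0$ here. The coset is not divisible, and we can arrange $pa=0$ or $pa$ of controlled height. We then check that $X\cup\{a\}$ is $p$-independent, which contradicts maximality.

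The delicate point is verifying purity of $B\oplus\langle a\rangle$. This reduces to showing that if $b+na\in p^mA$ then $na\in p^mA$ and $b\in p^mA$. We handle it by reducing modulo $B$ and using that $a+B$ is not in $p(A/B)$ together with the purity of $B$. This coset-height bookkeeping is the expected obstacle.

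Alternatively, we could simply cite Fuchs~\cite{Fuchs}, where the theorem is established exactly in this form.
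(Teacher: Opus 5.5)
The paper gives no proof of this theorem; it simply cites Fuchs, so your closing fallback is exactly what the paper does. The self-contained argument you sketch, however, has genuine gaps. First, your definition of $p$-independence is wrong as stated. Asking only that each term satisfy $n_xx\in p^mA$ forces neither directness nor purity, so the claimed equivalence with ``$\langle X\rangle=\bigoplus_{x\in X}\langle x\rangle$ and $p$-pure'' is false. In $A=\mathbb{Q}$ every subset of $A\setminus\{0\}$ satisfies your condition, so Zorn yields $B=\mathbb{Q}$. In $A=\Z$ the set $\{p\}$ satisfies it and is maximal: for any other $y\neq0$, the relation $y\cdot p-p\cdot y=0$ has terms $\pm yp\notin p^m\Z$ for large $m$. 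Yet $p\Z$ is not $p$-pure and $\Z/p\Z$ is not $p$-divisible. The conclusion must instead be $n_xx\in p^m\langle x\rangle$. With that, directness (this is where the restriction to infinite or $p$-power order enters), $p$-purity and finite character all hold, and the Zorn step is fine.

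Second, the main step is not actually carried out, and the dichotomy you propose is false. A non-$p$-divisible $C=A/B$ need not contain an element of order $p$ outside $pC$, nor any element of infinite order; take $C\cong C_{p^2}$. Moreover, for $a$ of infinite order, $a+B\notin p(A/B)$ plus purity of $B$ does not give purity of $B\oplus\langle a\rangle$. In $A=\Z\oplus\langle e\rangle$ with $e$ of order $p^2$, $B=0$ and $a=(p,e)$, one has $p^2a=(p^3,0)\in p^3A$ but $p^2a\notin p^3\langle a\rangle$.

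The missing idea is that every non-$p$-divisible $C$ has a nonzero $p$-pure cyclic subgroup of infinite or $p$-power order. If some $c\in C[p]$ has finite height $n$, write $c=p^ng$. Then $p^jg$ has height exactly $j$ for $j\le n$, so $\langle g\rangle\cong C_{p^{n+1}}$ is $p$-pure; your order-$p$ case is only $n=0$. Otherwise $C[p]\subseteq p^\omega C$. Induction on the order shows that every element of $p$-power order then has infinite height: if $pg=p^{n+1}y$ then $g-p^ny\in C[p]$. Hence any $c\notin pC$ has infinite order and $\langle c\rangle$ is $p$-pure.

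Now lift the generator to $a\in A$. In the finite case, $p^{n+1}a\in B\cap p^{n+1}A=p^{n+1}B$ lets you subtract an element of $B$ to get $p^{n+1}a=0$, so $B+\langle a\rangle=B\oplus\langle a\rangle$. This subgroup is $p$-pure in $A$ because $B$ is $p$-pure in $A$ and $\langle a+B\rangle$ is $p$-pure in $A/B$. To check this, divide $d+B$ by $p^m$ inside $\langle a+B\rangle$, then divide the leftover element of $B\cap p^mA$ inside $B$. This contradicts the maximality of $X$.
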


A proof and further structure theory can be found in the chapter on purity
and basic subgroups in~\cite{Fuchs}. We also need the following basic fact, the proof of which we report.

\begin{lemma}\label{lem:reduction}
Let $B$ be a $p$-basic subgroup of an abelian group $A$.
\begin{enumerate}[label=\textup{(\roman*)}]
\item For every finite abelian $p$-group $F$, restriction induces a bijection
\[
        \operatorname{Hom}(A,F)
        \longrightarrow
        \operatorname{Hom}(B,F).
\]
\item The inclusion $B\hookrightarrow A$ induces an isomorphism
\(
        \widehat B_p\cong\widehat A_p.
\)
\item There is a natural isomorphism
\(
        B/pB\cong A/pA.
\)
\end{enumerate}
\end{lemma}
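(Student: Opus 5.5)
The plan is to prove (i) first, derive (ii) from it by a universal-property argument, and obtain (iii) by reducing modulo $p$.

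\emph{Part (i).} Injectivity of restriction follows from $A/B$ being $p$-divisible. Suppose $f\colon A\to F$ vanishes on $B$. Then $f$ factors through the $p$-divisible group $A/B$, so its image $f(A)$ is a $p$-divisible subgroup of the finite $p$-group $F$. A finite $p$-group $E$ with $pE=E$ is trivial, hence $f=0$.

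Surjectivity is the main step. Since $F$ has exponent $p^n$ for some $n$, every $g\colon B\to F$ kills $p^nB$. It therefore suffices to show that the natural map $B/p^nB\to A/p^nA$ is an isomorphism; then $g$ factors through $B/p^nB\cong A/p^nA$ and lifts to $A$.
\begin{itemize}[label=--]
\item Injectivity of this map is exactly $p$-purity: $B\cap p^nA=p^nB$.
\item Surjectivity amounts to $A=B+p^nA$, which follows from $p$-divisibility of $A/B$ by induction on $n$. Indeed, $A=B+pA$, and if $A=B+p^kA$ then $pA=pB+p^{k+1}A$, so $A=B+pA=B+p^{k+1}A$.
\end{itemize}
The purity hypothesis is used only here. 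Taking $n=1$ gives (iii) directly: $B/pB\cong A/pA$ via the inclusion.

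\emph{Part (ii).} I would argue via universal properties. For any abelian group $X$, $\widehat X_p$ is the inverse limit of its finite $p$-quotients. Equivalently, continuous homomorphisms from $\widehat X_p$ to a finite $p$-group $F$ correspond to $\operatorname{Hom}(X,F)$.

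Alternatively, and more concretely, I would compare the inverse systems directly. Every subgroup of $p$-power index in $A$ contains $p^nA$ for some $n$, and the same holds in $B$. Hence
\[
\widehat A_p\cong\varprojlim_n A/p^nA,\qquad \widehat B_p\cong\varprojlim_n B/p^nB.
\]
The isomorphisms $B/p^nB\cong A/p^nA$ established in (i) are compatible with the projections as $n$ varies. Passing to the limit therefore gives an isomorphism, and it is the map induced by the inclusion.

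To confirm the cofinality claim: if $[A:N]=p^k$, then $A/N$ is a $p$-group of exponent dividing $p^k$, so $p^kA\le N$. Conversely, each $A/p^nA$ is a $p$-group, but it need not be finite. So I must also justify that $\widehat A_p$, defined via finite-index subgroups, coincides with $\varprojlim_n A/p^nA$ taken with appropriate completions. To handle this cleanly, I would instead use (i): the finite $p$-quotients of $A$ correspond bijectively, via $N\mapsto N\cap B$, to those of $B$. This follows by applying (i) with $F=A/N$, where the restriction to $B$ is still surjective because $A=B+p^kA\subseteq B+N$. The correspondence preserves inclusions, so the two inverse systems are isomorphic, and (ii) follows.
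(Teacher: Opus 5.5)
Your proposal is correct. Parts (i) and (iii) follow the paper's argument, only packaged differently. The paper writes $a=b+p^mx$, sets $\widetilde g(a)=g(b)$, and gets well-definedness from $B\cap p^mA=p^mB$; this is exactly your observation that $B/p^nB\to A/p^nA$ is an isomorphism.

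Part (ii) takes a genuinely different route. The paper builds the induced map $\theta\colon\widehat B_p\to\widehat A_p$. It proves injectivity by using (i) to extend to $A$ a finite $p$-quotient of $B$ that detects a given $x\neq0$. It gets surjectivity from density of the image together with compactness of $\widehat B_p$. You instead show that $N\mapsto N\cap B$ is an order-isomorphism between the $p$-power-index subgroups of $A$ and those of $B$, with $B/(N\cap B)\cong A/N$ induced by inclusion. The two inverse systems are then isomorphic, and so are their limits.

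Your route is purely algebraic. It also makes explicit a fact the paper uses without comment: $B+N=A$ for every $p$-power-index $N$, which is what density of $B$ in $\widehat A_p$ means. The paper's route is shorter, because the compactness argument spares it from checking that the correspondence is bijective and order-reflecting.

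In a write-up, make two changes:
\begin{enumerate}
\item Delete the intermediate claim $\widehat A_p\cong\varprojlim_n A/p^nA$. It is false whenever $A/pA$ is infinite, which is the case the paper cares about.
\item Spell out the two facts that ``applying (i) with $F=A/N$'' does not literally give. Surjectivity of $N\mapsto N\cap B$ uses (i) with $F=B/M$: extend $B\to B/M$ to $A$ and take the kernel. For order-reflection, suppose $N_1\cap B\le N_2\cap B$. Then $N_1\cap N_2$ and $N_1$ meet $B$ in the same subgroup, so both have index $[B:N_1\cap B]$ in $A$, and hence $N_1\le N_2$.
\end{enumerate}
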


\begin{proof}
Let $F$ have exponent dividing $p^m$.  If a homomorphism $A\to F$ vanishes on
$B$, it factors through the $p$-divisible group $A/B$ and is therefore zero:
for $a+B=p^m(x+B)$ its value is $p^m$ times the value of $x+B$.
Thus restriction is injective.

Conversely, let $g:B\to F$.  Since $A/B$ is $p^m$-divisible, every $a\in A$
can be written
\[
        a=b+p^m x
        \qquad (b\in B,\ x\in A).
\]
Define $\widetilde g(a)=g(b)$.  If also $a=b'+p^m x'$, then
$b-b'\in B\cap p^mA=p^mB$, and hence $g(b-b')=0$.  Thus $\widetilde g$ is
well defined; it is plainly a homomorphism extending $g$.  This proves (i).

The induced continuous homomorphism
\[
\theta:\widehat B_p\longrightarrow\widehat A_p
\]
has dense image, since the image of \(B\) is dense in \(\widehat A_p\).
Moreover, \(\theta\) is injective: if \(0\neq x\in\widehat B_p\), then some
finite \(p\)-quotient of \(B\) does not annihilate \(x\), and by~\textup{(i)}
this quotient extends to \(A\), so \(\theta(x)\neq0\).

Finally, \(\widehat B_p\) is compact and \(\widehat A_p\) is Hausdorff.
Hence \(\theta(\widehat B_p)\) is compact, and therefore closed in
\(\widehat A_p\). Since this image is also dense, it follows that
\(
\theta(\widehat B_p)=\widehat A_p.
\)
Thus \(\theta\) is an isomorphism of pro-\(p\) groups.  This proves (ii).

Finally, $A/B$ is $p$-divisible, so $A=B+pA$.  Together with
$B\cap pA=pB$, the second isomorphism theorem gives
\[
        A/pA\cong B/(B\cap pA)=B/pB.
\] The statement is proved.
\end{proof}

\section{Proof of the main theorem}

Let $B$ be a $p$-basic subgroup of $A$.  By definition there are index sets
$I_0,I_1,I_2,\ldots$ such that
\begin{equation}\label{eq:basic-decomp}
 B\cong
 \Z^{(I_0)}\oplus
 \bigoplus_{n\geq1} C_{p^n}^{(I_n)}.
\end{equation}
Let
\[
        I=I_0\sqcup I_1\sqcup I_2\sqcup\cdots.
\]
Reduction modulo $p$ and Lemma~\ref{lem:reduction} give
\begin{equation}\label{eq:modp-basic}
        A/pA\cong B/pB\cong C_p^{(I)}.
\end{equation}
Thus $A/pA$ is finite if and only if $I$ is finite, equivalently if and only
if $B$ is finitely generated.

\begin{proposition}\label{prop:shift}
If the index set \(I\) in~\eqref{eq:basic-decomp} is infinite, then
\(\widehat B_p\) admits a continuous surjective non-injective endomorphism.
\end{proposition}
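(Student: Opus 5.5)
The plan is to build a surjective, non-injective endomorphism $f$ of the abstract group $B$ by a ``shift'' along infinitely many cyclic summands, and then pass to pro-$p$ completions. The passage works as follows. Every abstract homomorphism $f\colon B\to B$ is continuous for the pro-$p$ topology, since the preimage of a subgroup of $p$-power index has $p$-power index. Hence $f$ induces a continuous endomorphism $\widehat f$ of $\widehat B_p$ extending $f$ on the image of $B$.

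I will split into two cases according to the decomposition~\eqref{eq:basic-decomp}.

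\emph{Case 1: some $I_n$ is infinite} (with $n\ge 0$). Write $C=\Z$ if $n=0$ and $C=C_{p^n}$ otherwise. Then $B=C^{(\mathbb N)}\oplus B'$ for some complement $B'$. Let $e_1,e_2,\dots$ be the standard generators of $C^{(\mathbb N)}$. Define $f$ to be the identity on $B'$, and set $f(e_1)=0$ and $f(e_{k+1})=e_k$.

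\emph{Case 2: every $I_n$ is finite.} Since $I$ is infinite, infinitely many $I_n$ with $n\ge1$ are nonempty. Choose $n_1<n_2<\cdots$ and cyclic direct summands $\langle c_k\rangle\cong C_{p^{n_k}}$ of $B$ such that $B=\bigoplus_k\langle c_k\rangle\oplus B'$. Define $f$ to be the identity on $B'$, and set $f(c_1)=0$ and $f(c_{k+1})=c_k$. This is well defined because the order $p^{n_k}$ of $c_k$ divides the order $p^{n_{k+1}}$ of $c_{k+1}$.

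In both cases $f$ is clearly surjective, and $0\neq e_1\in\ker f$ (respectively $0\neq c_1\in\ker f$).

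The remaining step, which I expect to be the only point needing care, is to transfer these two properties to $\widehat f$.

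For surjectivity, $\widehat f(\widehat B_p)$ contains the image of $f(B)=B$, which is dense. Since $\widehat B_p$ is compact and Hausdorff, this image is also closed, so it is all of $\widehat B_p$. This is the same compactness argument used in Lemma~\ref{lem:reduction}(ii).

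For non-injectivity, I need the image of $e_1$ (or $c_1$) in $\widehat B_p$ to be nonzero. Equivalently, this element must survive in some finite $p$-quotient of $B$. This holds because the projection of $B$ onto the summand $C$ (or $\langle c_1\rangle$), followed by reduction mod $p$, is a homomorphism onto $C_p$ that does not kill it. Since $\widehat f$ restricts to $f$ on the image of $B$, this nonzero element lies in $\ker\widehat f$. Hence $\widehat f$ is a continuous surjective non-injective endomorphism of $\widehat B_p$.
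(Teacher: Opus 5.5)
Your proof is correct and is essentially the paper's argument. You shift along a sequence of cyclic summands of nondecreasing order, obtained from the same dichotomy the paper uses: either some $I_n$ is infinite, or infinitely many distinct finite $n$ occur. You then extend to $\widehat B_p$, getting surjectivity from density plus compactness and non-injectivity because the first summand survives in a finite $p$-quotient. The only cosmetic difference is that you check survival of $e_1$ (or $c_1$) directly via projection and reduction mod $p$, whereas the paper invokes residual $p$-finiteness of $B$.
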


\begin{proof}
Write $B=\bigoplus_{i\in I} B_i$, where
\[
B_i\cong
\begin{cases}
\mathbb Z, & i\in I_0,\\
C_{p^n}, & i\in I_n.
\end{cases}
\]
Assign to each \(i\in I\) the height
\[
h(i)=
\begin{cases}
\infty, & i\in I_0,\\
n, & i\in I_n.
\end{cases}
\]

Since \(I\) is infinite, there exist distinct indices $i_1,i_2,\ldots$
such that $h(i_1)\leq h(i_2)\leq\cdots$. In fact, either one height occurs infinitely often, or infinitely many distinct
finite heights occur. For every \(k\geq 1\), the inequality $h(i_k)\leq h(i_{k+1})$ gives a surjective homomorphism
\[
\pi_k:B_{i_{k+1}}\twoheadrightarrow B_{i_k}.
\] Define an endomorphism
\[
\sigma:B\longrightarrow B
\]
by
\[
\sigma|_{B_{i_1}}=0,
\qquad
\sigma|_{B_{i_{k+1}}}=\pi_k
\quad (k\geq1),
\]
and let \(\sigma\) be the identity on every summand whose index does not belong
to \(\{i_1,i_2,\ldots\}\).

The map \(\sigma\) is surjective. Indeed, every unselected summand is fixed,
while each selected summand \(B_{i_k}\) is the image of \(B_{i_{k+1}}\).
It is not injective, since $B_{i_1}\leq \ker \sigma$.

Clearly, \(\sigma\) is continuous with respect to the
pro-\(p\) topology on \(B\).  Let
\[
\iota:B\longrightarrow\widehat B_p
\]
be the canonical map. Since \(\widehat B_p\) is a pro-\(p\) group, the
universal property of the pro-\(p\) completion applied to the continuous
homomorphism
\[
\iota\circ\sigma:B\longrightarrow\widehat B_p
\]
gives a unique continuous homomorphism
\[
\widehat{\sigma}:\widehat B_p\longrightarrow\widehat B_p
\]
such that
\(
\widehat{\sigma}\circ\iota=\iota\circ\sigma.
\)
Since \(\sigma(B)=B\), the image of \(\widehat{\sigma}\) contains the dense
subgroup \(B\) of~\(\widehat B_p\). The image of a compact group under a
continuous map is compact, hence closed, and therefore
\(
\widehat{\sigma}(\widehat B_p)=\widehat B_p.
\)
Thus \(\widehat{\sigma}\) is surjective.

Finally, \(B\) is residually a finite \(p\)-group, so the canonical map \(
B\longrightarrow\widehat B_p
\) is injective. Any non-zero element of \(B_{i_1}\) therefore remains non-zero
in \(\widehat B_p\) and belongs to \(\ker\widehat{\sigma}\). Consequently,
\(\widehat{\sigma}\) is not injective.
\end{proof}

\begin{theorem}\label{thm:local}
Let $A$ be an abelian group and let $p$ be a prime.  The following conditions are equivalent\textnormal:
\begin{enumerate}[label=\textup{(\roman*)}]
\item $\widehat A_p$ is topologically Hopfian\textnormal;
\item $A/pA$ is finite\textnormal;
\item $\widehat A_p$ is topologically finitely generated\textnormal;
\item every $p$-basic subgroup of $A$ is finitely generated.
\end{enumerate}
\end{theorem}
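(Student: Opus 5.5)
Fix a $p$-basic subgroup $B$ of $A$, as provided by Theorem~\ref{thm:basic-existence}, with decomposition \eqref{eq:basic-decomp} and index set $I$. By Lemma~\ref{lem:reduction}(ii) we may replace $\widehat A_p$ by $\widehat B_p$ throughout. By \eqref{eq:modp-basic}, $A/pA\cong C_p^{(I)}$. The plan is to show (ii)$\Leftrightarrow$(iv), then (ii)$\Rightarrow$(iii)$\Rightarrow$(i), and finally close the cycle with (i)$\Rightarrow$(ii) via Proposition~\ref{prop:shift}.

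For (ii)$\Leftrightarrow$(iv), observe that every $p$-basic subgroup $B'$ satisfies $B'/pB'\cong A/pA$ by Lemma~\ref{lem:reduction}(iii). A direct sum of cyclic groups as in \eqref{eq:basic-decomp} is finitely generated exactly when its index set is finite, and that index set has the same cardinality as the dimension of $B'/pB'$ over $\mathbb F_p$. Hence $A/pA$ is finite if and only if every (equivalently, some) $p$-basic subgroup is finitely generated. For (ii)$\Rightarrow$(iii), assume $A/pA$ is finite. Then $B$ is finitely generated, and the image of a finite generating set is dense in $\widehat B_p\cong\widehat A_p$, so $\widehat A_p$ is topologically finitely generated. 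The implication (iii)$\Rightarrow$(i) is exactly Lemma~\ref{lem:tfg-hopfian}.

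For (i)$\Rightarrow$(ii), argue by contraposition. If $A/pA$ is infinite, then $I$ is infinite by \eqref{eq:modp-basic}. Proposition~\ref{prop:shift} then gives a continuous surjective non-injective endomorphism $\widehat\sigma$ of $\widehat B_p$. Transport it through the topological isomorphism $\theta:\widehat B_p\to\widehat A_p$ of Lemma~\ref{lem:reduction}(ii) to obtain $\theta\widehat\sigma\theta^{-1}$, a continuous surjective non-injective endomorphism of $\widehat A_p$. So $\widehat A_p$ is not Hopfian.

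I expect no real obstacle, since the heavy lifting is done in Proposition~\ref{prop:shift} and Lemma~\ref{lem:reduction}. The one point needing care is that $\theta$ is a homeomorphism and not just a continuous bijection. This holds because $\widehat B_p$ is compact and $\widehat A_p$ is Hausdorff, so $\theta^{-1}$ is continuous and the conjugated map is indeed a continuous endomorphism.
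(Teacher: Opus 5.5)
Your proposal is correct and follows the paper's proof essentially step for step. You get (ii)$\Leftrightarrow$(iv) from $B/pB\cong A/pA\cong C_p^{(I)}$, then pass from finite generation of $B$ to topological finite generation of $\widehat B_p\cong\widehat A_p$ and to Hopficity via Lemma~\ref{lem:tfg-hopfian}, and for the converse you transport the shift endomorphism of Proposition~\ref{prop:shift} along $\widehat B_p\cong\widehat A_p$. You are slightly more explicit than the paper in two places: you apply Lemma~\ref{lem:reduction}(iii) to every $p$-basic subgroup, which the ``every'' in (iv) requires, and you note that $\theta$ is a homeomorphism. Neither changes the argument.
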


\begin{proof}
Choose a $p$-basic subgroup $B$ and write it as in
\eqref{eq:basic-decomp}.  By~\eqref{eq:modp-basic}, conditions (ii) and~(iv)
are equivalent.

If $B$ is finitely generated, then
\(
        B\cong\Z^{r_p}\oplus F_p
\)
with $r_p<\infty$ and $F_p$ a finite abelian $p$-group.  By
Lemma~\ref{lem:reduction},
\(\widehat A_p\cong\widehat B_p\). Thus (iv) implies (iii), and Lemma~\ref{lem:tfg-hopfian} gives
(iii)$\Rightarrow$(i).

If $B$ is not finitely generated, then the index set $I$ is infinite and
Proposition~\ref{prop:shift}, together with
$\widehat A_p\cong\widehat B_p$, shows that $\widehat A_p$ is not Hopfian.
Hence (i) implies (iv).  All conditions are therefore equivalent.
\end{proof}

\begin{corollary}\label{cor:p-primary}
Let $A$ be an abelian $p$-group.  Then the following are equivalent:
\[
 \widehat A\text{ is Hopfian},\qquad
 A/pA\text{ is finite},\qquad
 \widehat A\text{ is finite}.
\]
Equivalently, $A=D\oplus F$, where $D$ is divisible and $F$ is a finite
abelian $p$-group.
\end{corollary}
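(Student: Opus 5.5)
For an abelian $p$-group $A$ the plan is to reduce everything to the pro-$p$ component and then apply Theorem~\ref{thm:local}. First, $\widehat A=\widehat A_p$. Indeed, if $N\le A$ has finite index, then $A/N$ is a finite $p$-group, because every element of $A$ has $p$-power order. Hence the finite-index subgroups coincide with the subgroups of $p$-power index, and $\widehat A_q=0$ for every $q\neq p$. Theorem~\ref{thm:local} then gives: $\widehat A$ is Hopfian $\iff$ $A/pA$ is finite $\iff$ $\widehat A$ is topologically finitely generated $\iff$ every $p$-basic subgroup $B$ is finitely generated.

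Next we upgrade finite generation to finiteness. Since $A$ is a $p$-group, a $p$-basic subgroup $B$ cannot contain $\Z$, so $I_0=\emptyset$ and $B$ is a direct sum of finite cyclic $p$-groups. If $B$ is finitely generated, it is therefore finite, and Lemma~\ref{lem:reduction}(ii) gives $\widehat A=\widehat A_p\cong\widehat B_p=B$, which is finite. Conversely, if $\widehat A$ is finite, then it is topologically finitely generated, which closes the circle of equivalences.

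For the structural description, suppose $B$ is finite and $p$-basic. A finite $p$-pure subgroup is pure in the $p$-group $A$, since $p$-purity is purity for $p$-groups. A bounded pure subgroup is a direct summand (Kulikov/Prüfer, see~\cite{Fuchs}). So $A=B\oplus D$ with $D\cong A/B$, which is $p$-divisible, and a $p$-divisible $p$-group is divisible. Conversely, if $A=D\oplus F$ with $D$ divisible and $F$ finite, then $A/pA\cong F/pF$ is finite.

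The main obstacle is the splitting step, that a finite $p$-pure subgroup is a summand. If we want to avoid citing this, there is a direct argument. Take $m$ with $p^mB=0$. Then $B\cap p^mA=p^mB=0$, and $A=B+p^mA$ because $A/B$ is $p^m$-divisible. Hence $A=B\oplus p^mA$. Finally, $p^mA\cong A/B$ is $p$-divisible, hence divisible.
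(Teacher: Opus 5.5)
Your proposal is correct and follows essentially the paper's argument. You reduce to Theorem~\ref{thm:local}, observe that a $p$-basic subgroup of a $p$-group has no infinite cyclic summands and so is finite, split it off as a bounded pure subgroup, and get $\widehat A\cong B$ from Lemma~\ref{lem:reduction}. You also make explicit two points the paper leaves implicit, both correctly: that $\widehat A=\widehat A_p$ for a $p$-group, and the self-contained splitting $A=B\oplus p^mA$ obtained from $B\cap p^mA=p^mB=0$ and $A=B+p^mA$.
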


\begin{proof}
A $p$-basic subgroup $B$ of a $p$-group has no infinite cyclic summands.  If
$A/pA$ is finite, then Theorem~\ref{thm:local} shows that $B$ is finitely
generated and hence finite.  Since $B$ is pure and bounded, it is a direct
summand of $A$; write $A=B\oplus D$.  The quotient $A/B\cong D$ is divisible,
so $A=D\oplus B$ with $B$ finite.  Moreover,
$\widehat A\cong\widehat B=B$ by Lemma~\ref{lem:reduction}.  The converse is
immediate, since a finite completion is Hopfian and
Theorem~\ref{thm:local} then gives the finiteness of $A/pA$.
\end{proof}


\bigskip

\noindent{\it Proof of the Main Theorem}\quad By Lemma~\ref{lem:primary}, $\widehat A$ is Hopfian if and only if each
$\widehat A_p$ is Hopfian.  Apply Theorem~\ref{thm:local} prime by prime.\qed

\bigskip

\begin{corollary}\label{cor:localizations}
Let $S_i$ be sets of primes and put
\(
        A=\bigoplus_{i\in I}\Z[S_i^{-1}].
\)
For a prime $p$, set
\(
        I_p=\{i\in I:p\notin S_i\}.
\)
Then
\[
        \widehat A\text{ is Hopfian}
        \quad\Longleftrightarrow\quad
        I_p\text{ is finite for every prime }p.
\]
\end{corollary}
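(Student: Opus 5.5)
By the Main Theorem, $\widehat A$ is Hopfian if and only if $A/pA$ is finite for every prime $p$. The plan is therefore to compute $A/pA$ for each prime $p$ directly and show that it is an elementary abelian $p$-group of rank $|I_p|$.

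Since reduction modulo $p$ commutes with direct sums, we have
\[
A/pA\cong\bigoplus_{i\in I}\Z[S_i^{-1}]/p\Z[S_i^{-1}],
\]
so it suffices to analyse a single summand $R=\Z[S^{-1}]$ for a set of primes $S$. There are two cases.

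If $p\in S$, then $p$ is a unit in $R$, so $pR=R$ and $R/pR=0$.

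If $p\notin S$, I claim $R/pR\cong C_p$. Consider the composite $\Z\to R\to R/pR$.
\begin{enumerate}[label=\textup{(\alph*)}]
\item It is surjective. Any element of $R$ has the form $a/s$ with $s$ a product of primes in $S$. Since $\gcd(s,p)=1$, choose $t\in\Z$ with $st\equiv1\pmod p$. Then
\[
a/s-at=a(1-st)/s\in pR .
\]
\item Its kernel is $p\Z$. If $m=pr$ with $r=a/s\in R$, then $ms=pa$. As $p\nmid s$, it follows that $p\mid m$. Conversely $p\Z\subseteq pR$ trivially.
\end{enumerate}
Equivalently, $R/pR\cong R\otimes\Z/p\Z\cong\Z_{(p)}$-localized at primes in $S$ applied to $C_p$, which is $C_p$ because those primes act invertibly on $C_p$ already.

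Combining the two cases gives $A/pA\cong C_p^{(I_p)}$. This is finite exactly when $I_p$ is finite, and applying the Main Theorem prime by prime then yields the stated equivalence.

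No step here is a real obstacle. The only point needing care is the elementary computation $\Z[S^{-1}]/p\Z[S^{-1}]\cong C_p$ when $p\notin S$, namely the kernel argument in (b). As a check, the example $G=\bigoplus\Z[1/q_i]$ has $I_2=\mathbb N$, recovering the failure of Hopficity.
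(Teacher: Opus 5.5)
Your proposal is correct and follows the paper's proof: it computes $A/pA\cong\bigoplus_{i\in I_p}C_p$ summand by summand and then applies the Main Theorem, with your parts (a) and (b) filling in the details the paper leaves implicit. The ``Equivalently'' aside is garbled; it mislabels $S^{-1}(\Z/p\Z)$ as a $\Z_{(p)}$-localization. It is not needed and can simply be dropped.
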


\begin{proof}
Reduction modulo $p$ gives
\[
 \Z[S_i^{-1}]/p\Z[S_i^{-1}]
 \cong
 \begin{cases}
 0,&p\in S_i,\\
 C_p,&p\notin S_i.
 \end{cases}
\]
Thus $A/pA\cong\bigoplus_{i\in I_p}C_p$, and the conclusion follows from the Main Theorem.
\end{proof}

\begin{example}[Hopfian but not finitely generated]\label{ex:positive}
For each prime $p$, choose a finite integer $n_p\geq0$ and put
\[
        A=\bigoplus_{p\in\Primes}\Z_{(p)}^{\,n_p},
\]
where $\Z_{(p)}$ consists of rational numbers whose denominators are prime to
$p$.  Then
\[
        A/qA\cong C_q^{\,n_q}
        \quad\text{and}\quad
        \widehat A\cong\prod_{p\in\Primes}\Z_p^{\,n_p}.
\]
Hence $\widehat A$ is Hopfian.  If the $n_p$ are unbounded, then $\widehat A$
is not topologically finitely generated.
\end{example}

\section*{Acknowledgements}

The authors are members of the non-profit association ``AGTA -- Advances in
Group Theory and Applications'' (www.advgrouptheory.com) and are supported by GNSAGA (INdAM).

\end{document}